\newtheorem{defn}{Definition}[section]
\newtheorem{thm}{Theorem}[section]
\newtheorem{lem}{Lemma}[section]
\newcommand{\R}{\mathbb{R}}
\renewcommand{\d}{{\rm d}}
\newcommand{\sfd}{{\sf d}}
\newcommand{\X}{{\sf X}}
\newcommand{\Y}{C}
\newcommand{\lims}{\varlimsup}
\newcommand{\f}{{\sf pen}}
\newcommand{\Lip}{{\rm Lip}}
\def\lipa#1#2{{\rm lip}_a (#1,#2)}
\def\ep{\varepsilon}
\newcommand{\Ch}{{\rm Ch}}
\newcommand{\restr}[1]{\lower3pt\hbox{$|_{#1}$}} 
\def\mm{\mathfrak m}
\begin{document}
\author{S. Di Marino, N. Gigli, A. Pratelli}
\address{Simone Di Marino, Universit\`a di Genova, Dipartimento di Matematica,  via Dodecaneso 35, 16146 Genova (GE), Italy}
\email{simone.dimarino@unige.it}
\address{Nicola Gigli, SISSA, Via Bonomea 265, 34136 Trieste (TS), Italy}
\email{ngigli@sissa.it}
\address{Aldo Pratelli, Universit\`a di Pisa, Dipartimento di Matematica, Largo Bruno Pontecorvo 5, 56127 Pisa (PI), Italy} 
\email{aldo.pratell@unipi.it}
\title{Global Lipschitz extension preserving local constants}
\date{}
\begin{abstract}
\noindent
The intent of this short note is to extend real valued Lipschitz functions on metric spaces, while locally preserving the asymptotic Lipschitz constant. We then apply this results to give a simple and direct proof of the fact that Sobolev spaces on metric measure spaces defined with a relaxation approach \`a la Cheeger are invariant under isomorphism class of mm-structures. 
\end{abstract}
\maketitle

\section{introduction}

Consider a metric space $(\X,\sfd)$, a closed subset $C\subset\X$ and a $L$-Lipschitz function $g:C\to\R$. A well-known result by McShane \cite{McShane} ensures that there is a $L$-Lipschitz extension of $g$ to the whole $\X$, i.e.\ a $L$-Lipschitz function $f:\X\to\R$ whose restriction to $C$ coincides with $g$. In fact, McShane proof comes with an explicit construction and a maximality argument: any such $f$ must lie between the  $L$-Lipschitz extensions  $f^\pm$ of $g$ defined by
\[
\begin{split}
f^+(y)&:=\inf_{x\in C}g(x)+L\sfd(x,y),\\
f^-(y)&:=\sup_{x\in C}g(x)-L\sfd(x,y).
\end{split}
\]
For other results in this direction see also Milman's extension theorem in \cite{Milman98}.

\bigskip

In this paper we shall also consider a Lipschitz extension problem, but our goal is to preserve  not only  the global Lipschitz constant, but also the \emph{asymptotic Lipschitz constant}. Let us recall the definition of such quantity. For $g:C\to\R$ and $A\subset C$ the Lipschitz constant of $g$ on $A$ is defined as
\[
{\rm Lip} (g,A): = \sup \Big\{ \frac {|g(y_1)-g(y_2)|}{\sfd(y_1,y_2)} \; : \;  y_1,y_2 \in A\: , \;y_1 \neq y_2 \Big\}.
\]
Then for every $x\in C$ the asymptotic Lipschitz constant of $g$ at $x$ is given by
\[
\lipa{g}{x}:=\inf_{r > 0} {\rm Lip} ( g, C\cap B_r(y))=\lim_{r \to 0} {\rm Lip} ( g, C\cap B_r(y)).
\]
Observe that, for brevity, in the notation $\lipa{g}{x}$ we are omitting to emphasise the domain of definition of $g$, albeit this evidently has a role in the definition.

It is clear that if $f:\X\to\R$ is an extension of $g$, then the inequality $\lipa{g}{x}\leq \lipa{f}{x}$ holds for every $x\in C$.  Our goal is to find Lipschitz extensions that achieve the equality.

\begin{thm}\label{thm:main} Let $(\X,\sfd)$  be a metric space, $C\subset \X$ a subset and  $g: C \to \mathbb{R}$ a $L$-Lipschitz function. Then for every $\ep>0$ there exists an $(L+\ep)$-Lipschitz function $f: \X \to \mathbb{R}$ whose restriction to $C$ coincides with $g$ and such that
\begin{equation}
\label{eq:lipaug}
\lipa{f}{x}=\lipa{g}{x} \qquad \text{ for all  } x \in C.
\end{equation}
Moreover if $g$ is bounded (resp.\ with bounded support), then $f$ can be chosen to be bounded (resp.\ with bounded support).
\end{thm}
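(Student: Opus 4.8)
The plan is to construct $f$ by a careful modification of the McShane-type extension formula, where instead of using the single global constant $L$ everywhere, I would try to localize the slope so that near each point $x\in C$ the cost of connecting to $x$ uses something close to the local constant $\lipa{g}{x}$. The naive McShane extension $f^+(y)=\inf_{x\in C}g(x)+L\sfd(x,y)$ fixes the global Lipschitz constant but typically inflates the asymptotic constant at points where $g$ is locally flatter than $L$, so the core task is to redistribute the penalty.

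**First I would** cover $C$ by a countable family of balls on which the local Lipschitz constant of $g$ is controlled. Concretely, for each $x\in C$ and each $n$, the definition of $\lipa{g}{x}$ gives a radius $r$ with $\Lip(g,C\cap B_r(x))\le \lipa{g}{x}+\ep$; I would extract a suitable countable subfamily (using separability of the relevant part of $\X$, or working locally ball by ball) and attach to each chosen ball $B_i$ a local constant $\ell_i$ that is an upper bound for $\Lip$ of $g$ on a slightly larger neighborhood. The idea is then to build a \emph{distance-like cost} $c(x,y)$ on $\X\times C$ that behaves like $\ell_i\,\sfd(x,y)$ for $y$ near a point $x$ sitting in $B_i$, and like $L\,\sfd$ (or $(L+\ep)\,\sfd$) globally, and set
\[
f(y):=\inf_{x\in C}\ g(x)+c(x,y).
\]
The cost $c$ should be the infimum over chains / a path-type (intrinsic) cost so that it satisfies the triangle inequality, which is what guarantees $f$ agrees with $g$ on $C$ and is globally $(L+\ep)$-Lipschitz.

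**The key steps in order** are: (i) set up the localized cost $c$ and verify it is symmetric-in-the-right-sense and satisfies a triangle inequality, with the global bound $c(x,y)\le (L+\ep)\sfd(x,y)$ and the local matching $c(x,y)\le (\lipa{g}{x}+\ep)\sfd(x,y)$ for $y$ close to $x$; (ii) check $f|_C=g$, which reduces to showing $g(x')-g(x)\le c(x,x')$ for $x,x'\in C$ — this is exactly where the local-to-global passage along chains must reproduce the Lipschitz estimates of $g$, and is the step I expect to be the main obstacle, since one must ensure the intrinsic cost between two points of $C$ never drops below the actual increment of $g$; (iii) deduce the upper bound $\lipa{f}{x}\le \lipa{g}{x}$ from the local behavior of $c$ near each $x$, while the reverse inequality $\lipa{g}{x}\le\lipa{f}{x}$ is free since $f$ extends $g$; and (iv) handle the boundedness and bounded-support addenda by truncating, i.e.\ replacing $g$ with $\min(\max(g,-M),M)$ or intersecting with a large ball and using that truncation does not increase local constants.

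**The hard part will be** calibrating the local radii and constants so that the triangle-inequality-respecting cost $c$ simultaneously (a) stays below $(L+\ep)\sfd$ globally, (b) collapses to the correct local slope $\lipa{g}{x}+o(1)$ near each $x$, and (c) still satisfies $c(x,x')\ge g(x')-g(x)$ on $C$ so that the extension restricts correctly to $g$. The delicate tension is between making $c$ \emph{small} locally (to control $\lipa{f}{x}$) and keeping it \emph{large enough} on $C$ (to not destroy the values of $g$); resolving this likely requires choosing, for each point, a neighborhood on which the slope bound is valid and then gluing these local slopes into one globally consistent intrinsic metric, absorbing all the errors into the single $\ep$ in the global constant $L+\ep$.
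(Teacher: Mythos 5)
Your plan correctly isolates the crux of the problem --- keeping the cost small near each $x\in C$ (to control $\lipa{f}{x}$) while keeping $c(x,x')\geq g(x')-g(x)$ on $C$ (so that $f\restr C=g$) --- but the mechanism you propose cannot resolve it: it fails on the paper's own sharpness example. Take $\X=[0,1]$, $C=\{0,1\}$, $g(0)=0$, $g(1)=1$, so $L=1$ and $\lipa{g}{x}=0$ at both points of $C$. Your cover attaches to $0$ and to $1$ balls of radius about $\tfrac12$ carrying local constants $\ell_i\leq\lipa{g}{x}+\ep=\ep$. The chain $0\to\tfrac12-\delta\to\tfrac12+\delta\to1$ then has intrinsic cost at most $\ell_1\bigl(\tfrac12-\delta\bigr)+(L+\ep)2\delta+\ell_2\bigl(\tfrac12-\delta\bigr)$, which tends to at most $\ep$ as $\delta\to0$; hence $c(0,1)\leq\ep<1=g(1)-g(0)$ and $f(1)\leq g(0)+c(0,1)<g(1)$, so the extension property is destroyed. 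The failure is structural, not a matter of calibration: imposing the triangle inequality by infimizing over chains can only \emph{decrease} the cost, i.e.\ it works systematically against your requirement (c), because chains can travel through points of $\X\setminus C$ (or through far-apart patches of $C$) where the small local slopes are not tied to the increments of $g$. Restricting chains to lie in $C$ repairs (c), but then the last hop to a point $y\notin C$, if given the hard-cutoff form ``slope $\ell_i$ inside $B_i$, slope $L+\ep$ outside,'' has jump discontinuities at ball boundaries, so the resulting $f$ need not even be Lipschitz; moreover the localization of the infimum needed for your step (iii) is never established. A secondary issue: your appeal to countable subcovers/separability runs against the statement (and its intended application, which concerns precisely non-separable $\X$).

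The idea you are missing --- and which is how the paper resolves the tension --- is to abandon the triangle inequality altogether and let the slope of the cost from $x$ \emph{grow with the distance from $x$} at a scale-matched rate. The paper fixes scales $\ep_k$, $k\in\mathbb Z$, with $\ep_{k-1}/\ep_k\leq\ep/(3(L+\ep))$, and sets $\phi_x(y)=g(x)+\f_x(\sfd(x,y))$, where $\f_x$ is the convex function with $\f_x(0)=0$ and $\f_x'(t)=S_k(x)+3L\ep_{k-2}/\ep_{k-1}$ for $t\in(\ep_{k-2},\ep_{k-1})$, with $S_k(x):=\Lip(g,C\cap B_{\ep_k}(x))$. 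So at distance scale $t\approx\ep_k$ the cost charges (slightly more than) the Lipschitz constant of $g$ at that same scale, not the asymptotic one; the correction $3L\ep_{k-2}/\ep_{k-1}$, made summable by the choice of scales and absorbed into the budget $\ep$, yields the quantitative margin $\phi_x(y)\geq g(y)+L\,\ep_{k-2}$ for $y\in C$ with $\sfd(x,y)\in[\ep_{k-1},\ep_k]$. This margin gives both $f\restr C=g$ and the localization of the infimum (near $\bar x$ only centers $x\in C\cap B_{\ep_k}(\bar x)$ matter), from which $\lipa{f}{\bar x}\leq\lipa{g}{\bar x}$ follows via the stability of Lipschitz bounds under infima. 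Any repair of your scheme must replace ``fixed local slope on a fixed ball'' by such a scale-adapted convex radial penalization, at which point you are re-deriving the paper's proof. Finally, on the addendum: truncating $g$ is not the issue; the paper truncates $f$ to get boundedness, and for bounded support runs the construction with budget $\ep/2$ and multiplies the bounded extension by an $\tfrac{\ep}{2M}$-Lipschitz cutoff equal to $1$ on a neighbourhood of $C$, which is what preserves both the global constant and \eqref{eq:lipaug}.
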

We collect some comments:
\begin{itemize}
\item[a)] Our result is sharp in the sense that we cannot maintain the same Lipschitz constant, even if we allow to increase the asymptotic Lipschitz constants by $\ep$. To see this consider the case $\X=[0,1]$, $C=\{0,1\}$ and $g(t):=t$ for $t\in C$. Then $g$ is 1-Lipschitz, $\lipa{g}{t}=0$ for any $t\in C$ and the only 1-Lipschitz extension of $g$ to the whole $\X$ is given by $f(t)=t$ for every $t\in\X$. However, for such $f$ we have $\lipa{f}{t}=1$ for every $t$.
\item[b)] Our proof is a kind of `localized' or `nonlinear' variant of McShane's argument, in particular, it is entirely constructive. This also means that we can build a right inverse of the restriction map $f\mapsto f\restr C$, for which the conclusion of our theorem hold, without using any form of Axiom of Choice.
\item[c)] It is unclear to us if one can extend functions while preserving the \emph{local Lipschitz constant} defined as ${\rm lip}(f,x):=\lims_{y\to x}\frac{|f(y)-f(x)|}{\sfd(x,y)}$.
\item[d)] Our result is relevant in, and motivated by, the study of Sobolev spaces over complete and possibly non-separable metric spaces equipped with a tight measure (that is, under the stated assumptions, a measure concentrated on a separable subset). Indeed, typically Sobolev spaces over metric measure spaces are studied in the separable case (see e.g.\ \cite{Cheeger00}, \cite{Shanmugalingam00}, \cite{HKST15}, \cite{AmbrosioGigliSavare11}, \cite{Bjorn-Bjorn11}) and one can wonder which of their properties remain valid in this slightly more general context. Moreover, the point of view generically adopted when studying lower Ricci curvature bounds is to consider a metric measure space $(\X,\sfd,\mathfrak m)$ to be isomorphic to $({\rm supp}(\mathfrak m),\sfd,\mathfrak m)$: this of course is possible only if all the relevant definitions are insensible to the existence of points outside the support of the measure. While this is clearly the case for the curvature-dimension condition (at least if one pays a bit of attention in stating it properly), things are more delicate for what concerns Sobolev spaces. There are indeed approaches to Sobolev functions, like the one based on the concept of modulus of a family of curves or the one based on the notion of test plan, for which clearly the existence of points outside the support of the measure is irrelevant; but for the definitions given via relaxation of some form of metric modulus of differential, like upper gradients, local Lipschitz constants or asymptotic Lipschitz constants, the situation is more complicated as these quantity \emph{are} affected by the behaviour of the function outside the support of the measure. 

There are various possible ways to see that these latter definitions of Sobolev spaces (in particular the one involving $\lipa fx$) are also invariant by isomorphism:
\begin{itemize}
\item[-] At least if $\X$ itself is separable, one can couple the  Lindelof property of $\X$ with a known property of Sobolev functions  (that is the locality of minimal weak upper gradients) to conclude. This is what has been done in \cite{DMGSP18}.
\item[-] One can check that all the arguments carried out in \cite{AmbrosioGigliSavare11},  \cite{AmbrosioGigliSavare11-3} that prove the equivalence of the various definitions of Sobolev functions do not really require separability of the space but only that of the support of the measure; then the conclusion would follow from the fact that the approach via test plan is invariant under isomorphism. Technically this is possible, and it works, but certainly it is a very indirect way to argue and seems an unnecessarily complicated argument.
\end{itemize}
None of these two approaches is really satisfactory. Instead, a direct consequence of our simple extension result is the invariance under isomorphism of metric measure structures of the definition of Sobolev functions via relaxation of the asymptotic Lipschitz constants: see Theorem \ref{thm:sobapp} and notice that its proofs does not require any knowledge of the structure of Sobolev functions or about other possible definitions.

We conclude emphasising that there are situations where it is natural to work with tight measures on non-separable spaces. In fact, this study is a byproduct of a research program devoted to the study of harmonic maps $u$ from ${\sf RCD}$ to ${\sf CAT}(0)$ spaces and in the process of doing so it is useful to consider the push-forward via $u$ of the measure on the source to endow the target metric space with a measure, and then to consider Sobolev functions in the resulting metric measure structure (see e.g.\ \cite{GPS18}, \cite{GT18}). Given that ${\sf CAT}(0)$ spaces are typically not separable but harmonic - and more generally metric-valued $L^2$ functions  - are defined to be essentially separably valued (for technical reasons analogue to those behind the same assumption when dealing with Bochner integration of Banach-valued maps), the need of studying Sobolev functions over non-separable spaces equipped with tight measures is explained.

\end{itemize}

\section{Proof of the main theorem}

We shall frequently make use of the following simple and  well known lemma:

\begin{lem}\label{lem:LLip} Let $(\X,\sfd)$ be a metric space, $A \subset \X$, $I$ a set of indexes and for every $i\in I$ let $f_i:\X\to\R$ be such that ${\rm Lip} (f_i,A)\leq L$. Then the function $f(x):= \inf_{i \in I} f_i(x)$ also satisfies ${\rm Lip} (f,A)\leq L$.
\end{lem}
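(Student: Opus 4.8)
The plan is to reduce the statement to a pointwise comparison between two arbitrary points of $A$ and to extract information from the infimum via an $\eta$-approximation. First I would fix two distinct points $x,y\in A$ and, appealing to the symmetry of the roles of $x$ and $y$, assume without loss of generality that $f(x)\geq f(y)$; it then suffices to bound the nonnegative quantity $f(x)-f(y)$ by $L\,\sfd(x,y)$.

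The key step exploits the defining property of the pointwise infimum. For an arbitrary $\eta>0$ I would choose an index $i\in I$ with $f_i(y)\leq f(y)+\eta$, which is possible exactly because $f(y)=\inf_{j\in I}f_j(y)$. Combining $f(x)\leq f_i(x)$ (again from the definition of $f$) with the hypothesis $\Lip(f_i,A)\leq L$, I chain the inequalities
\[
f(x)-f(y)\leq f_i(x)-f_i(y)+\eta\leq L\,\sfd(x,y)+\eta.
\]
Letting $\eta\downarrow 0$ gives $f(x)-f(y)\leq L\,\sfd(x,y)$, and since $x,y\in A$ were arbitrary this is precisely $\Lip(f,A)\leq L$.

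The only point requiring a word of care is that $f$ be a genuine real-valued function, i.e.\ that the infimum not take the value $-\infty$. I would observe that finiteness of $f$ at a single point $x_0\in A$ already propagates to all of $A$: the hypothesis $\Lip(f_i,A)\leq L$ yields, for every $y\in A$, the bound $f_i(y)\geq f_i(x_0)-L\,\sfd(x_0,y)\geq f(x_0)-L\,\sfd(x_0,y)$ uniformly in $i$, whence $f(y)>-\infty$. Apart from this elementary remark, the argument is routine and I anticipate no substantial obstacle; the content of the lemma is entirely captured by the $\eta$-approximation of the infimum carried out above.
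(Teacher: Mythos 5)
Your proof is correct and is essentially the paper's own argument: both pick, for a given pair $x,y\in A$ and tolerance $\ep>0$, an index $i$ nearly attaining the infimum at one of the two points, then combine $f\leq f_i$ with $\Lip(f_i,A)\leq L$ and let $\ep\downarrow 0$. Your closing remark on the finiteness of the infimum (that $f(x_0)>-\infty$ at one point of $A$ propagates to all of $A$) is a harmless supplement the paper omits, but it does not change the substance of the argument.
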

\begin{proof} Let $x,y \in A$, $\ep>0$ and  $i\in I$ such that $f(x) \geq f_{i_x}(x)-\ep$. Using the assumption  that ${\rm Lip} (f_i,A)\leq L$ and the fact that  $f_{i} \geq f$ we get
\[
f(x) \geq f_{i}(x) - \ep \geq f_{i}(y) - L \, \sfd(x,y)  - \ep \geq f(y) - L \,\sfd(x,y) -\ep. 
\]
Reverting the roles of $x,y$ we deduce that  $|f(x)-f(y)| \leq L\, \sfd(x,y) + \ep$, so that the conclusion follows from the arbitrariness of  $\ep$.
\end{proof}

\begin{proof}[Proof of Theorem~\ref{thm:main}] It is not restrictive  to assume that $\ep\leq  L$ (as if $L=0$ the claim is trivial). Let us now consider a sequence $\{ \ep_k \}_{k \in \mathbb{Z}}$ such that:
\begin{itemize} 
\item[(i)] $\ep_k> 0$ for every $k\in\mathbb Z$; 
\item[(ii)] $k\mapsto\frac{\ep_{k-1}}{\ep_k} $ is increasing and goes to 0 when $k \to -\infty$;
\item[(iii)] for every $k \in \mathbb{Z}$ it holds
\begin{equation}\label{eqn:rappep}
\frac{ \ep_{k-1} }{\ep_k} \leq \frac \ep{3(L+\ep)}.
\end{equation}
\end{itemize} 
It is clear that such a sequence exists.

Then we will consider the  approximating slopes $S_k (x) := {\rm Lip} (g, C\cap B_{\ep_k} ( x))$ and the penalisation function  $ \f_x : [0,\infty) \to [0, \infty)$, defined as the only continuous function such that
\begin{equation}\label{eqn:deffx}
\f_x(0)=0 \qquad \qquad \f'_x(t) = S_k(x) + 3 L \frac{ \ep_{k-2}}{\ep_{k-1}} \qquad \text{ for }\ep_{k-2} < t < \ep_{k-1}.
\end{equation}
It is easy to see that  $(ii)$ grants that this is a good definition. Moreover, the fact that $k\mapsto S_k(x)$ is increasing and bounded by $L$ together with $(ii),(iii)$ ensure that
\begin{equation}
\label{eq:penconv}
\f'_x(t)\ \text{is bounded and increasing, i.e.\ $\f_x$ is convex and Lipschitz}.
\end{equation}
Then we put:
\begin{equation}\label{eqn:deff}
\begin{split}
\phi_x (y)&: = g(x) + \f_x ( \sfd(x,y))\qquad\forall x\in C,\ y\in \X \\
 f(y)&:= \inf_{x \in C} \left\{ \phi_x(y) \right\}\qquad\qquad\qquad\qquad\ \ \forall  y\in \X
\end{split}
\end{equation}
(Notice that the choice $\f_x(t) = L  t$ for every $t\geq 0$ would correspond to  McShane upper extension).
  
We will prove that $f$ is in fact the required extension for $g$ in several steps:\\

\noindent\textbf{Step 1.} We claim that
\begin{equation}
\label{eq:claim1}
\text{$\phi_x$ is $(L+\ep)$-Lipschitz for every $x\in C$.}
\end{equation}
By the very definition of $\phi_x$ it is sufficient to prove that   $\f_x$ is $(L+\ep)$-Lipschitz for any $x\in C$. To see this simply observe that for every $k\in \mathbb{Z}$ we have
\[
\f'_x(t) \stackrel{\eqref{eqn:deffx}}= S_k(x) + 3 L \frac{ \ep_{k-2}}{\ep_{k-1}} \stackrel{\eqref{eqn:rappep}}\leq L + 3 L  \frac {\ep}{ 3(L+\ep)} \leq L+ \ep, \qquad \forall t \in (\ep_{k-2}, \ep_{k-1})
\]
and  conclude by the arbitrariness  of $k \in \mathbb{Z}$.

\vspace{10pt}

\noindent\textbf{Step 2.} We claim that
\begin{equation}
\label{eq:claim2}
\text{whenever $x,y \in C$ and  $\sfd(x,y) \in [\ep_{k-1}, \ep_{k}]$, we have $\phi_x(y) \geq g(y) + \ep_{k-2} L$. }
\end{equation}
In fact, we have $g(x) \geq g(y)- S_k(x) \, \sfd(x,y)$ by the definition of $S_k(x)$, while  $\f_x(\sfd(x,y)) \geq \int_{\ep_{k-2}}^{\sfd(x,y)} \f'_x(t)\, \d t$ and thus:
\begin{align*}
\phi_x(y)& =g(x) + \f_x(\sfd(x,y)) \\
&\geq g(y)- S_k(x) \, \sfd(x,y)+ \int_{\ep_{k-2}}^{\sfd(x,y)} \f'_x(t)\, \d t  \\
 (\text{by }\eqref{eqn:deffx} \text{ and }\eqref{eq:penconv})\qquad\qquad&\geq  g(y) - S_k(x) \, \sfd(x,y) +  (\sfd(x,y) - \ep_{k-2}) \, \big(S_k(x) + 3 L \frac{ \ep_{k-2}}{\ep_{k-1}}\big) \\
 & = g(y) - \ep_{k-2} S_k(x) + 3L (\sfd(x,y) - \ep_{k-2}) \, \frac{ \ep_{k-2}}{\ep_{k-1}} \\
(\text{by }S_x(x)\leq L\text{ and }\sfd(x,y)\geq \ep_{k-1})\qquad\qquad & \geq  g(y) - \ep_{k-2} L + 3L (\ep_{k-1} - \ep_{k-2}) \, \frac{ \ep_{k-2}}{\ep_{k-1}}  \\
 &=  g(y) + \ep_{k-2} L + L (\ep_{k-1} - 3\ep_{k-2}) \, \frac{ \ep_{k-2}}{\ep_{k-1}}.
\end{align*}
To conclude notice that  \eqref{eqn:rappep} grants that $\ep_{k-1} \geq 3\ep_{k-2}$.

\vspace{10pt}

\noindent \textbf{Step 3.} We claim that 
\begin{equation}
\label{eq:claim3}
\text{ $f$ is an $(L+\ep)$-Lipschitz extension of $g$.}
\end{equation}
To this aim start noticing that  Step 2 ensures that  $\phi_x(y) \geq g(y)$ for every $x,y \in C$; by the very definition \eqref{eqn:deff} of $f$ this proves that $f(y) \geq g(y)$. On the other hand, trivially it holds $\phi_y(y)=g(y)$ for any $y\in C$, and thus  $f(y) \leq g(y)$. 

This shows that $f$ is an extension of $g$. The claim about the Lipschitz constant  follows directly from Lemma~\ref{lem:LLip} and Step 1.

\vspace{10pt}

\noindent \textbf{Step 4.}  We claim that
\begin{equation}
\label{eq:claim4}
\forall\bar x\in C,\ k\in\mathbb Z\quad\text{we have}\quad f(y)  = \inf_{x\in C \cap B_{\ep_{k}}(\bar{x}) }   \phi_x(y)  \qquad \forall y \in B_{\ep_{k-2}}(\bar{x}).
\end{equation}
In other words, for any   $\bar{x} \in C$, $k\in\mathbb Z$  and   $y \in B_{\ep_{k-2}}(\bar{x})$ the inf in \eqref{eqn:deff} does not change if we just consider $\phi_x$ with $x \in C \cap B_{\ep_{k}}(\bar{x})$.

To prove such claim we will show that for $\bar x,k,y$ as above and $x\in C$ with $\sfd(x,\bar x)\geq \ep_k$ we have 
\begin{equation}
\label{eq:claim44}
\phi_x(y)\geq f(y)+\ep_{k-1}\frac L3.
\end{equation}
Start noticing that by Step 3 we know that
\[
f(y) \leq g(\bar{x})+ \ep_{k-2} (L+\ep).
\]
On the other hand we have
$$\phi_x(y) \stackrel{\eqref{eq:claim1}}\geq \phi_x(\bar{x}) - \ep_{k-2}(L+\ep) \stackrel{\eqref{eq:claim2}}\geq g(\bar x) + \ep_{k-1} L - \ep_{k-2}(L+\ep).$$
The claim \eqref{eq:claim44} follows from these two inequalities, the bound  \eqref{eqn:rappep}  and the assumption $\ep\leq L$ made at the beginning of the proof.

\vspace{10pt}

\noindent \textbf{Step 5.} We claim that
\begin{equation}
\label{eq:claim5}
\text{For every $\bar x \in C$ we have $\lipa{f}{\bar x}=\lipa{g}{\bar x}$.  }
\end{equation}
Clearly it is sufficient to prove the inequality $\leq$. Let us fix $\bar x \in C$. By the definition of the asymptotic Lipschitz constant, it is sufficient to show that, for every $\bar{r}>0$ and $\xi>0$ we can find $r>0$ such that 
\begin{equation}
\label{eq:concl}
{\rm Lip} (f, B_r(\bar x)) \leq {\rm Lip} ( g, B_{\bar r}(\bar x)) + \xi.
\end{equation}

In order to prove this let us consider $k \in \mathbb{Z}$ such that $\ep_{k+3} < \bar r$ and $3L \frac {\ep_k}{\ep_{k+1}} < \xi $ (this is possible thanks to $(ii)$ which also ensures that $\ep_k\to0$ as $k\to-\infty$); then we claim that $r:=\ep_{k-2}$ will work.  To see this let $x\in C\cap B_{\ep_k}(\bar x)$ and notice that by \eqref{eqn:rappep} it easily follows that
\begin{subequations}
\begin{align}
\label{eq:p1}
B_{r}(\bar x)&\subset B_{\ep_{k+1}}(x)\\
\label{eq:p2}
 B_{\ep_{k+2}}(x)&\subset  B_{\ep_{k+3}}(\bar x)
\end{align}
\end{subequations}
and therefore
\[
{\rm Lip}(\phi_x,B_r (\bar{x})) \stackrel{\eqref{eq:p1}}\leq {\rm Lip}(\phi_x, B_{\ep_{k+1}} (x))\stackrel{\eqref{eqn:deffx}} \leq S_{k+2}(x) + 3L \frac {\ep_k}{\ep_{k+1}} \stackrel{\eqref{eq:p2}}\leq S_{k+3}(\bar{x}) +3L \frac {\ep_k}{\ep_{k+1}}. 
\]
Recalling our choice of $k$ we just proved that
\[
{\rm Lip}(\phi_x,B_r (\bar{x})) \leq{\rm Lip}(g,B_{\bar{r}} (\bar{x}))  +\xi\qquad\forall x\in C\cap B_{\ep_k}(\bar x)
\]
and the conclusion \eqref{eq:concl} follows by   Step 4 and  Lemma~\ref{lem:LLip}.

\vspace{10pt}

\noindent \textbf{Step 6.} We prove the last claims. If $g$ is bounded, then up to replacing $f$ with $-C\vee f\wedge C$ for $C>0$ sufficiently large we produce a bounded extension retaining all the required properties.

If moreover $g$ has bounded support (and thus in particular, being Lipschitz, is bounded), let $f$ be a bounded  extension satisfying \eqref{eq:lipaug} and $\Lip(f)\leq L+\ep/2$. Put $M:=\sup_{x\in\X}|f(x)|$ and let $\chi:\X\to[0,1]$ be a $\frac\ep {2M}$-Lipschitz function with bounded support which is identically 1 on a neighbourhood of $C$ (e.g.\ $\chi(x):=0\vee \big(2-\tfrac\ep {2M}\sfd(x,C)\big)\wedge 1
$).

Then the function $\chi f$ still satisfies \eqref{eq:lipaug}, has bounded support and Lipschitz constant bounded by
\[
\Lip(\chi f)\leq \Lip(\chi)\sup|f|+\Lip(f)\sup|\chi|\leq \frac\ep{2M}M+L+\frac\ep2=L+\ep,
\]
thus the proof is completed.
\end{proof}
\section{Application to Sobolev Spaces}

In this section we discuss the  application of our extension result to the study Sobolev spaces that we presented in the introduction.

We shall denote by ${\rm Lip}_{bs} (\X)$ the space of Lipschitz functions  on $\X$ with bounded support. 
\begin{defn}[Sobolev Spaces] Let $(\X,\sfd)$ be a complete metric space and $\mm$ a non-negative and non-zero Borel measure on $\X$. For $p\geq 1$ we define the functional $\Ch_{p,\X} : L^p(\X, \mm) \to [0 , \infty]$ as
\[
\Ch_{p,\X} ( f) : = \inf \left\{ \liminf_{n \to \infty} \int_\X \lipa{f_n}{x}^p \, \d\mm (x) \; : \;  f_n \in {\rm Lip}_{bs}(\X) ,\ f_n \to f \text{ in }L^p(\X, \mm) \right\}.
\]
\end{defn}
The functional $\Ch_{p,\X} $ is central in the definition of the BV/Sobolev spaces on metric measure spaces, as for $p>1$ one puts $W^{1,p}(\X,\sfd, \mm):=\{\Ch_{p,\X}(f) < \infty\}$, while for $p=1$ the domain of finiteness of $\Ch_{1,\X}$ is defined to be the space of functions of bounded variations.

Notice that if $\Y\subset\X$ is a set where $\mm$ is concentrated, then we can naturally identify the spaces $L^p(\X,\mm)$ and $L^p(\Y,\mm)$: we will constantly do this in the next result:
\begin{thm}\label{thm:sobapp} Let $(\X,\sfd)$ be a complete metric space and $\mm$ a non-negative and non-zero Borel measure on $\X$. Let  $\Y\subset\X$ be a closed subset on which $\mm$ is concentrated and, for brevity, denote by $\sfd,\mm$ the restrictions to $\Y$ of the distance and measure on $\X$, respectively.

Then for every $p\geq1$ the functionals  $\Ch_{p,\X}$ and $\Ch_{p,\Y}$ coincide on $L^p(\X, \mm)\sim L^p(\Y, \mm)$.
\end{thm}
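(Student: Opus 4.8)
The plan is to fix $f\in L^p(\X,\mm)\sim L^p(\Y,\mm)$ and to establish the two inequalities $\Ch_{p,\Y}(f)\le\Ch_{p,\X}(f)$ and $\Ch_{p,\X}(f)\le\Ch_{p,\Y}(f)$ by transporting admissible approximating sequences between the two functionals. The entire content is encoded in Theorem~\ref{thm:main}: once the correct sequences are produced, the fact that $\mm$ is concentrated on $\Y$ makes the two energy integrals coincide, since $\int_\X(\cdots)\,\d\mm=\int_\Y(\cdots)\,\d\mm$.

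For the inequality $\Ch_{p,\Y}(f)\le\Ch_{p,\X}(f)$ I would start from an arbitrary sequence $(f_n)\subset\Lip_{bs}(\X)$ with $f_n\to f$ in $L^p(\X,\mm)$ and simply restrict it to $\Y$. Each $f_n\restr\Y$ lies in $\Lip_{bs}(\Y)$, and since $\Y\cap B_r(x)\subset B_r(x)$ we have $\lipa{f_n\restr\Y}{x}\le\lipa{f_n}{x}$ for every $x\in\Y$. Because $\mm$ is concentrated on $\Y$, the integral over $\Y$ of the left-hand side is bounded by the integral over $\X$ of the right-hand side, while under the identification $f_n\restr\Y\to f$ in $L^p(\Y,\mm)$. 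Passing to the $\liminf$ and then to the infimum over competing sequences yields the claimed inequality. This direction uses nothing beyond the monotonicity of the asymptotic Lipschitz constant under restriction to a subset.

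For the reverse inequality $\Ch_{p,\X}(f)\le\Ch_{p,\Y}(f)$ I would start from an arbitrary sequence $(g_n)\subset\Lip_{bs}(\Y)$ with $g_n\to f$ in $L^p(\Y,\mm)$, and here Theorem~\ref{thm:main} is the crucial tool. For each $n$, applying it with any fixed value of the parameter $\ep$ (say $\ep=1$, whose role is immaterial) produces $f_n\in\Lip(\X)$ with $f_n\restr\Y=g_n$, with bounded support since $g_n$ has bounded support, and satisfying the \emph{exact} identity $\lipa{f_n}{x}=\lipa{g_n}{x}$ for every $x\in\Y$. Using once more that $\mm$ is concentrated on $\Y$, this gives $\int_\X\lipa{f_n}{x}^p\,\d\mm=\int_\Y\lipa{g_n}{x}^p\,\d\mm$, while $f_n\to f$ in $L^p(\X,\mm)$ because $f_n$ agrees with $g_n$ on $\Y$ and $\mm$ sees only $\Y$. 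Hence $(f_n)$ is an admissible competitor for $\Ch_{p,\X}(f)$ carrying exactly the same energy as $(g_n)$, and taking $\liminf$ and then the infimum gives the bound.

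The step that really matters, and the only place where anything nontrivial happens, is this second one: it hinges on the exact preservation of the asymptotic Lipschitz constant on $\Y$ granted by Theorem~\ref{thm:main}. Since the energy integrals only feel the function on $\Y$, where the measure lives, the fact that the extension may enlarge the global Lipschitz constant by $\ep$ is irrelevant, so no error term survives and one obtains equality rather than a one-sided estimate; this is precisely why an asymptotic-preserving extension, and not a plain McShane extension, is the right tool. The preservation of bounded support, also part of Theorem~\ref{thm:main}, is what keeps $f_n$ inside the class $\Lip_{bs}(\X)$ used in the definition, and the remaining bookkeeping with the identification $L^p(\X,\mm)\sim L^p(\Y,\mm)$ is routine.
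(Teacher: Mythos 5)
Your proposal is correct and coincides with the paper's own proof: both directions are handled exactly as in the paper, restricting competitors to $\Y$ for one inequality and invoking Theorem~\ref{thm:main} (with the extension's exact preservation of $\lipa{\cdot}{x}$ on $\Y$ and of bounded support) for the other, with the concentration of $\mm$ on $\Y$ making the energies coincide. Your remark that the parameter $\ep$ is immaterial here is also accurate, since only the asymptotic Lipschitz constants on $\Y$, not the global Lipschitz constant of the extension, enter the energy.
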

\begin{proof} Let $f \in L^p(\X,\mm)$ and $(f_n)\subset {\rm Lip}_{bs}(\X)$ a sequence that converges to $f$ in the $L^p(\X,\mm)$-norm. Put $g_n:=f_n\restr \Y\in {\rm Lip}_{bs}(\Y)$ and notice that evidently    $g_n \to f$ in $L^p(\Y,\mm)$ and moreover $\lipa{g_n}{x} \leq \lipa{f_n}{x} $ for every $x \in \Y$. In particular we have
\[
 \Ch_{p,\Y} (f)  \leq \liminf_{n \to \infty} \int_\Y \lipa{g_n}{x}^p \, \d \mm(x) \leq \liminf_{n \to \infty} \int_\X \lipa{f_n}{x}^p \, \d \mm(x)
 \]
and  taking the infimum with respect to all the possible sequences $f_n$ we get $ \Ch_{p,\Y}(f) \leq  \Ch_{p,\X}(f)$. 

To prove the opposite inequality we shall make use of our Theorem \ref{thm:main}. Let   $(g_n)\subset {\rm Lip}_{bs}(\Y)$ be converging to $f$ in  $L^p(\Y,\mm)$ and use  Theorem \ref{thm:main} to obtain the existence of functions $(f_n)\subset {\rm Lip}_{bs}(\X)$ coinciding with $g_n$ on $\Y$ and such that $\lipa{f_n}{x} = \lipa{g_n}{x}$ for every $x\in\Y$. In particular, we still have $f_n\to f$ in $L^p(\X,\mm)$ and therefore
\[
 \Ch_{p,\X} (f)  \leq \liminf_{n \to \infty} \int_\X \lipa{f_n}{x}^p \, \d \mm(x) = \liminf_{n \to \infty} \int_\Y \lipa{g_n}{x}^p \, \d \mm(x),
 \]
so that, again, taking the infimum with respect to all approximating sequences $(g_n)$ yields  $\Ch_{p,\X}(f) \leq \Ch_{p,\Y}(f)$, concluding  the proof.
\end{proof}

\def\cprime{$'$} \def\cprime{$'$}

\end{document}